\documentclass[10pt]{amsart}
\theoremstyle{plain}
\usepackage{amsmath, amsfonts, amsthm, amssymb,amscd,bbold, hyperref}
\usepackage{graphics}
\usepackage{color}
\usepackage{epsfig}
\usepackage[all]{xy}
\setlength{\textwidth}{5.5in}
\setlength{\oddsidemargin}{0.50in}
\setlength{\evensidemargin}{0.40in}

\graphicspath{ {Figures/} }

\title[Braids]{An elementary fact about unlinked braid closures}

\author{J. Elisenda Grigsby}
\thanks{JEG was partially supported by NSF grant number DMS-0905848 and NSF CAREER award DMS-1151671.}
\address{Boston College; Department of Mathematics; 301 Carney Hall; Chestnut Hill, MA 02467}
\email{grigsbyj@bc.edu}

\author{Stephan M. Wehrli}
\thanks{SMW was partially supported by NSF grant number DMS-1111680.}
\address{Syracuse University; Mathematics Department; 215 Carnegie; Syracuse, NY 13244, USA}
\email{smwehrli@syr.edu}

\theoremstyle{plain}

\newtheorem{lemma}{Lemma}
\newtheorem{proposition}{Proposition}
\newtheorem{corollary}{Corollary}

\theoremstyle{definition}

\newtheorem{question}{Question}

\newcommand{\Szabo}{{Szab{\'o}} }

\newcommand{\Z}{\ensuremath{\mathbb{Z}}}

\newcommand{\F}{\ensuremath{\mathbb{F}}}

\newcommand{\Id}{\ensuremath{\mbox{\textbb{1}}}}

\newcommand{\cI}{\ensuremath{\mathcal{I}}}

\newcommand{\Wedge}{\ensuremath{\Lambda}}

\newcommand{\CKh}{\ensuremath{\mbox{CKh}}}
\newcommand{\Kh}{\ensuremath{\mbox{Kh}}}

\newcommand{\spincs}{\ensuremath{\mathfrak{s}}}

\begin{document}
\bibliographystyle{plain}
\begin{abstract} Let $n \in \Z^+$. We provide a short Khovanov homology proof of the following classical fact: if the closure of an $n$--strand braid $\sigma$ is the $n$--component unlink, then $\sigma$ is the trivial braid.
\end{abstract}
\maketitle

Let $\mathcal{B}_n$ denote the $n$--strand braid group, $\Id_n \in \mathcal{B}_n$ the $n$--strand trivial braid, and $U_n$ the $n$--component unlink in $S^3$. Denote by $\widehat{\sigma}$ the closure of $\sigma \in \mathcal{B}_n$, considered as a link in $S^3$. The following fact first appears in the literature in \cite[Thm. 4.1]{CochranTrivBraid}:

\begin{proposition} \label{prop:Braid} Let $\sigma \in B_n$. If $\widehat{\sigma} = U_n$, then $\sigma = \Id_n$.
\end{proposition}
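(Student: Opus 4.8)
The plan is to use that $\widehat{\sigma}$ is canonically a link in a thickened annulus --- the complement of a braid axis --- and to analyze it with the annular (sutured) refinement $\SKh$ of Khovanov homology. As a warm-up reduction, note that $\widehat{\sigma}=U_n$ has $n$ components, which forces the underlying permutation of $\sigma$ to be trivial; so $\sigma$ is a pure braid, and each braid strand closes to one component of $\widehat{\sigma}$. I would then recall the two features of $\SKh$ that the argument needs: (i) it is a triply-graded invariant of links in the thickened annulus, carrying a homological grading, a quantum grading, and an ``annular'' grading $k$, and for the closure of an $n$--strand braid one has $|k|\le n$; and (ii) the annular grading filters the Khovanov complex, producing a spectral sequence with $E_1$--page $\SKh(\widehat{\sigma})$ converging to the ordinary Khovanov homology $\Kh(\widehat{\sigma})$, whose higher differentials fix the quantum grading, raise the homological grading by one, and strictly lower $k$.

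The computational heart of the argument is the extremal annular grading $k=n$. In the cube of resolutions of a closed $n$--braid diagram, the oriented (Seifert) resolution is $n$ parallel essential circles, and it is the unique resolution with $n$ essential circles; every other resolution has strictly fewer. Hence the complex computing $\SKh$ in annular grading $n$ is just $\Z$, spanned by the oriented resolution with every circle labelled by the ``positive'' generator, concentrated in a single bidegree. Tracking the standard grading shifts, one gets $\SKh(\widehat{\sigma})\cong\Z$ in annular grading $n$, concentrated in homological grading $0$ and quantum grading $n+e$, where $e$ is the exponent sum of $\sigma$; mirroring the argument, $\SKh(\widehat{\sigma})\cong\Z$ in annular grading $-n$, concentrated in homological grading $0$ and quantum grading $e-n$.

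Now I would invoke the hypothesis. Since $\widehat{\sigma}=U_n$, we have $\Kh(\widehat{\sigma})\cong\Kh(U_n)$, which sits entirely in homological grading $0$, in quantum gradings between $-n$ and $n$, and is one-dimensional in each of the extreme quantum gradings $\pm n$. I would then feed the two extremal classes of $\SKh(\widehat{\sigma})$ through the spectral sequence: the class in annular grading $n$ can only support differentials, the class in annular grading $-n$ can only receive them, and comparing their homological and quantum degrees against what is available in $\Kh(U_n)$ should force $e=0$ and identify these classes with the extremal summands of $\Kh(U_n)$. Replaying this for every sub-braid obtained by deleting strands --- whose closures are again unlinks --- then forces all pairwise linking numbers of $\widehat{\sigma}$ to vanish and, inductively, pins down the lower annular gradings of $\SKh(\widehat{\sigma})$ to agree with those of the trivial closed braid.

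The step I expect to be the main obstacle is the final one: closing the gap between ``$\SKh(\widehat{\sigma})$ looks like that of the trivial braid'' and ``$\sigma=\Id_n$''. The hypothesis directly controls only the $S^3$--invariant $\Kh(\widehat{\sigma})$, whereas the desired conclusion is about the annular --- i.e.\ genuinely braid-theoretic --- data; so one must either control the $\SKh$-to-$\Kh$ spectral sequence sharply enough to recover $\SKh$ of the trivial braid on the nose and then invoke a faithfulness/detection input, or else marry the annular computation to a braid-theoretic induction --- for instance, reducing via strand deletion to the case $\sigma=\Delta^{2k}$ and noting that $\widehat{\Delta^{2k}}$ is a nontrivial torus link for $k\ne 0$. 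Keeping this last step elementary, rather than importing heavier Floer- or gauge-theoretic detection theorems, is where the real work lies.
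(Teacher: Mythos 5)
Your proposal has a genuine gap, and it is exactly the one you flag at the end: nothing in your argument converts homological information about $\widehat{\sigma}$ into the dynamical conclusion $\sigma=\Id_n$. The data you extract --- exponent sum zero, vanishing pairwise linking numbers, and (via strand deletion) the fact that all sub-closures are unlinks --- is far too weak: Brunnian-type pure braids such as the commutator $[\sigma_1^2,\sigma_2^2]\in\mathcal{B}_3$ have zero writhe, all linking numbers zero, and trivial proper sub-braids, yet are nontrivial. So there is no induction on strands that ``pins down the lower annular gradings,'' and the suggested reduction to $\sigma=\Delta^{2k}$ is not available. Even the step ``comparing degrees should force $e=0$'' is incomplete as stated: the extremal annular classes can a priori die in the spectral sequence (the $k=n$ class by supporting a differential, the $k=-n$ class by receiving one), so their quantum gradings need not be visible in $\Kh(U_n)$ without a further argument. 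What is needed is a detection statement, and invoking one (e.g., that annular Khovanov homology detects the trivial braid) would import precisely the theorem being proved.

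It is worth noting that you are circling the right objects: your two extremal annular classes are exactly the elements $\Psi^{+}$ (all circles labelled $1$) and $\Psi^{-}$ (all circles labelled $x$) of the braid-like resolution, the latter being Plamenevskaya's transverse invariant. The paper's proof replaces your spectral-sequence bookkeeping with the $\mathcal{A}_n=\F[x_1,\ldots,x_n]/(x_1^2,\ldots,x_n^2)$ module structure on $\Kh$ coming from one basepoint per component: since $\Kh(U_n)\cong\mathcal{A}_n$ as a module, applying $x_1\cdots x_n$ to a cycle representing $1$ shows $x_1\cdots x_n(\theta)=\Psi^{-}$, because every generator other than $\Psi^{+}$ is annihilated (any non-braid-like resolution has a circle carrying two basepoints). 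Hence $[\Psi^{-}]\neq 0$, which by Baldwin--Grigsby implies $\sigma$ is right-veering; the mirrored argument gives left-veering; and the only braid that is both is the identity. That last right-/left-veering step is the detection mechanism your outline is missing, and without it (or a substitute of comparable strength) your argument does not close.
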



The purpose of this note is to provide a short Khovanov homology proof of Proposition \ref{prop:Braid}. Although the classical proof contained in \cite{CochranTrivBraid} is straightforward, we hope the Khovanov homology proof will also be of interest, since it suggests ways in which algebraic properties of Khovanov homology--in particular, its module structure--can give information about braid dynamics.

It may be of interest to the reader that there is a parallel story--going through the double-branched cover operation--involving minimal complexity fibered links in connected sums of copies of $S^1 \times S^2$. 

Explicitly, let $Y_n$ denote $\#^n (S^1 \times S^2)$. For $L$ a fibered link with fiber $F$, we will abuse terminology and refer to $\chi(F)$ as the {\em Euler characteristic of $L$}.

Define \[\mathcal{L}_n := \{\ell \in \Z^+\,\,\vline\,\, \ell \leq (n+1) \mbox{ and } \ell \equiv (n+1) \mod 2 \}.\] Note that for each $\ell \in  \mathcal{L}_n$, it is straightforward to construct a fibered link, ${\bf L}_\ell \subset Y_n$, of Euler characteristic $1-n$. See Figure \ref{fig:FiberedLinksS1S2}.  The monodromy of ${\bf L}_\ell$ is trivial, and the pair $(Y_n, \bf{L}_\ell)$ is well-defined up to diffeomorphism.

\begin{figure}
\begin{center}
\resizebox{4in}{!}{\input{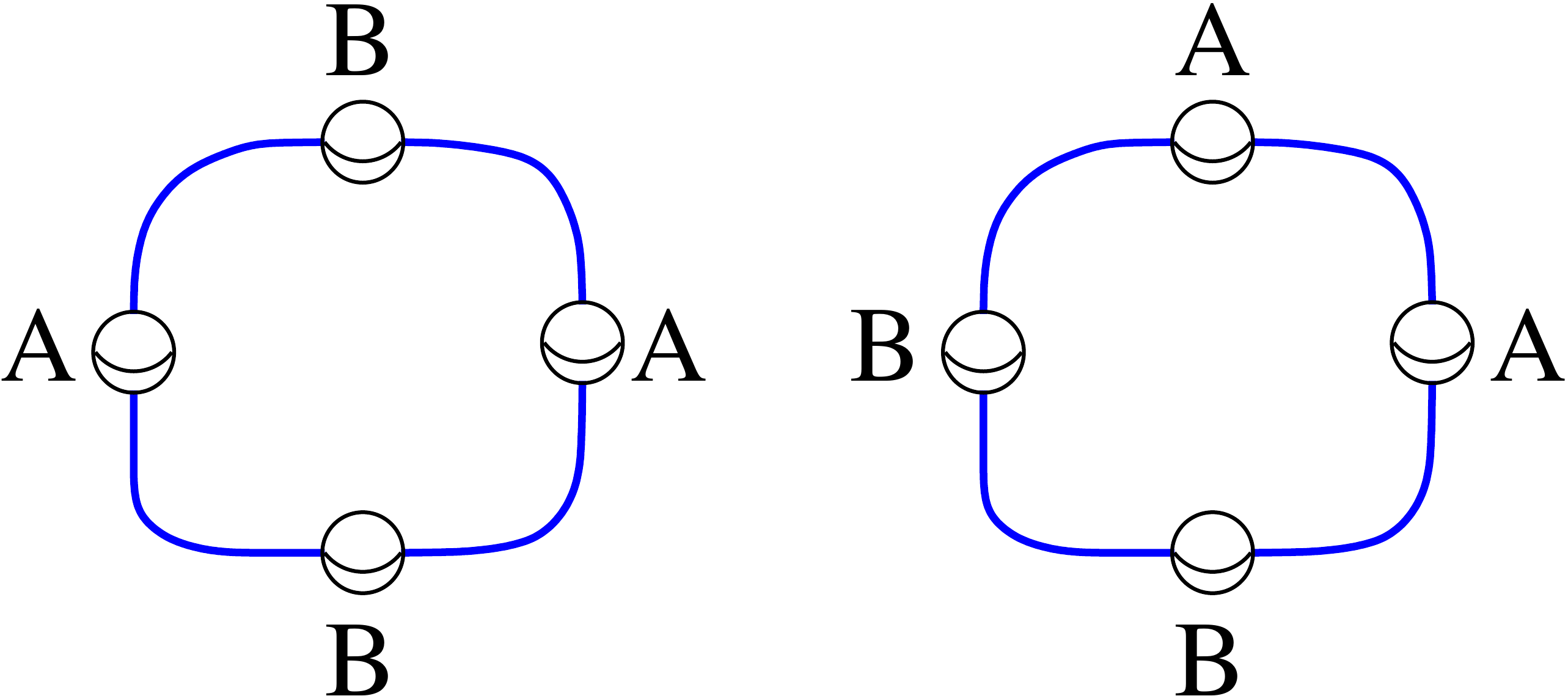_t}}
\end{center}
\caption{Kirby diagrams of the links ${\bf L}_1$ (left) and ${\bf L}_3$ (right) in $Y_2 := \#^2 S^1 \times S^2$. The $S^2$'s (boundaries of the feet of $4$--dimensional $1$--handles) are identified as labeled, via a reflection in the plane perpendicular to the straight line joining their centers. The fibered link in each case is drawn in blue. To construct ${\bf L}_\ell \in Y_n$ in general, arrange $n$ pairs of $S^2$'s along an unknot in $S^3$ so that attaching $2$--dimensional one-handles to the disk bounded by the unknot, via the chosen configuration, forms an oriented surface with $\ell$ boundary components.}
\label{fig:FiberedLinksS1S2}
\end{figure}

The following result appears in \cite{NiHomAction}.  Indeed, after the first version of this note appeared, it was pointed out in \cite[Cor. 1.3]{GhigginiLisca} that Proposition \ref{prop:UniqueFiberedLink} implies Proposition \ref{prop:Braid}.

\begin{proposition} \label{prop:UniqueFiberedLink} \cite[Prf. of Thm. 1.3]{NiHomAction} Let $L_\ell \subset Y_n$ be a fibered, $\ell$--component link with $\ell \in \mathcal{L}_n$ and Euler characteristic $1-n$. Then the pair $(Y_n,L_\ell)$ is diffeomorphic to the pair $(Y_n, {\bf L}_\ell)$.
\end{proposition}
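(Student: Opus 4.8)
The plan is to reduce the statement to the single claim that the monodromy of the fibration of $M:=Y_n\setminus\operatorname{int}N(L_\ell)$ is trivial, and then to invoke the uniqueness of open books with a prescribed page. Since $L_\ell$ is fibered, $M$ is the mapping torus of a homeomorphism $\phi\colon F\to F$ restricting to the identity on $\partial F$, where $F$ is the (connected) fiber, and $(Y_n,L_\ell)$ is precisely the pair determined by the open book with page $F$ and monodromy $\phi$. The constraint $\chi(F)=1-n$ forces $F$ to have genus $g=(n+1-\ell)/2$, which is a nonnegative integer exactly because $\ell\in\mathcal{L}_n$; hence $F$ is determined up to diffeomorphism by $(n,\ell)$. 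Since open books with monodromies isotopic rel $\partial F$ produce diffeomorphic pairs, and $(Y_n,{\bf L}_\ell)$ is by construction the open book with page $F$ and trivial monodromy, it suffices to prove $\phi\simeq\operatorname{id}_F$ rel $\partial F$.

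First I would record a purely homological reduction. The fibration class $[\pi]\in H^1(M;\Z)$ pairs to $\pm1$ with each meridian $\mu_i$ of a component of $L_\ell$, so $\langle\mu_1,\dots,\mu_\ell\rangle\le H_1(M;\Z)$ has positive rank; since $H_1(M;\Z)\to H_1(Y_n;\Z)\cong\Z^n$ is surjective with kernel that subgroup, we get $b_1(M)\ge n+1$. On the other hand the Wang sequence of the fibration gives $H_1(M;\Z)\cong\operatorname{coker}\!\bigl(1-\phi_\ast\colon H_1(F;\Z)\to H_1(F;\Z)\bigr)\oplus\Z$, and $\chi(F)=1-n$ makes $H_1(F;\Z)\cong\Z^n$, so $b_1(M)\le n+1$. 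Therefore $b_1(M)=n+1$, $H_1(M;\Z)\cong\Z^{n+1}$ is torsion free, and $1-\phi_\ast=0$ on $H_1(F;\Z)$. Thus $M$ and all of its classical homology already coincide with those of the standard model $Y_n\setminus N({\bf L}_\ell)=F\times S^1$; the remaining content is to promote "$\phi_\ast=\operatorname{id}$" to "$\phi\simeq\operatorname{id}$", which in general fails (the Torelli group is nontrivial) and is where Heegaard--Floer input enters.

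For that I would pass to sutured Floer homology. Equip $M$ with the sutured structure $(M,\gamma)$ induced by the fibration, namely two coherently oriented parallel copies of $\partial F$ on each boundary torus. Because the fiber of a fibration is Thurston-norm minimizing, $(M,\gamma)$ is taut and balanced, and decomposing $(M,\gamma)$ along $\pm F$ are taut, well-groomed surface decompositions producing the product sutured manifold $(F\times I,\partial F\times I)$; Juh\'asz's surface-decomposition theorem then exhibits $\Z\cong SFH(F\times I)$ as a summand of $SFH(M,\gamma)$ supported in a single $\mathrm{Spin}^c$ structure. Now I would feed in: (i) detection of fiberedness by sutured Floer homology (Ghiggini, Ni), forcing that extremal group to have rank exactly one; (ii) Ni's theorem that the support of $SFH$ computes the sutured Thurston norm, which together with the homology of $M$ found above constrains the support; and (iii) the action of $\Lambda^\ast H^1(Y_n;\Z)\cong\widehat{HF}(Y_n)$, and hence of $\Lambda^\ast H_1(M;\Z)$, on $SFH(M,\gamma)$. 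Combining these one identifies $SFH(M,\gamma)$, as a $\Lambda^\ast H_1(M;\Z)$-module, with the corresponding invariant of $(Y_n,{\bf L}_\ell)$, and a rigidity theorem for sutured Floer homology --- Juh\'asz's product-detection theorem in the simplest case, and its module-theoretic refinement, which is the content borrowed from \cite[Prf. of Thm. 1.3]{NiHomAction} --- recovers $(M,\gamma)$ as the standard sutured complement, so $\phi\simeq\operatorname{id}_F$, completing the proof in light of the first paragraph.

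The homological reduction is routine and the reduction to open books is standard $3$--manifold topology. I expect the main obstacle to be step (iii): one must genuinely compute $SFH(M,\gamma)$ together with its exterior-algebra module structure, not merely bound its rank, and then extract from that algebraic data the \emph{isotopy} class, not just the homology class, of $\phi$. This is exactly where the hypotheses that the ambient manifold is $Y_n$ --- so that $\widehat{HF}(Y_n)$ is the \emph{full} exterior algebra $\Lambda^\ast\Z^n$ and nothing is hidden --- and that $\chi(F)=1-n$ is extremal must be used essentially; the parity and size conditions packaged into $\mathcal{L}_n$ are precisely what make the bookkeeping close.
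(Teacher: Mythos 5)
Your opening reduction is correct and agrees with the paper's endgame: since $\ell\in\mathcal{L}_n$ and $\chi(F)=1-n$ determine the page up to diffeomorphism, and $(Y_n,{\bf L}_\ell)$ is the open book with that page and trivial monodromy, it suffices to show the monodromy $\phi$ is isotopic to the identity rel $\partial F$. Your Wang-sequence computation showing $\phi_\ast=\operatorname{id}$ on $H_1(F;\Z)$ is also fine. But the decisive step --- promoting $\phi_\ast=\operatorname{id}$ to $\phi\simeq\operatorname{id}$ --- is not actually supplied. You defer it to ``a rigidity theorem for sutured Floer homology \ldots which is the content borrowed from [Prf.\ of Thm.\ 1.3, NiHomAction]''; since that is precisely the proof the proposition is attributed to, this is a citation of the result rather than an argument for it. Moreover, the tools you do name don't close the gap as stated: Juh\'asz's product-detection theorem requires $SFH(M,\gamma)\cong\Z$ in \emph{total}, whereas the fiber-parallel sutured complement of \emph{any} fibered link has rank one only in the extremal $\mathrm{Spin}^c$ grading --- that is exactly fiberedness detection and carries no information about the monodromy beyond fiberedness itself. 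And it is not at all clear (indeed, it is the whole difficulty) that the isomorphism class of $SFH(M,\gamma)$ as a $\Lambda^\ast H_1$-module determines the isotopy class of $\phi$; you flag this yourself as ``the main obstacle'' but offer no mechanism to overcome it.

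The paper's mechanism, which is what your sketch is missing, is the right-veering dichotomy. One builds the Honda--Kazez--Mati\'c diagram for $-Y_n$ from the open book, takes a cycle $\theta$ representing $1\in\widehat{HF}(-Y_n)\cong\Lambda^\ast H_1(-Y_n;\F)$, and applies the full homological action $A_{[\gamma_1]}\cdots A_{[\gamma_n]}$ for a dual basis $\{\gamma_i\}$ on the ``uninteresting'' half of the page. A local-multiplicity argument at each coordinate $x_i$ of the contact generator ${\bf x}$ forces the only surviving target of this composite to be ${\bf x}$ itself, so $[{\bf x}]=\zeta_1\cdots\zeta_n\neq 0$, i.e.\ the contact invariant of the supported contact structure is nonzero. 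By Honda--Kazez--Mati\'c, $\phi$ is then right-veering; running the same argument on the mirror shows $\phi^{-1}$ is right-veering, so $\phi$ is also left-veering; and a mapping class that is both right- and left-veering is isotopic to the identity. Some bridge of this kind --- from a Floer-theoretic nonvanishing statement to a constraint on the isotopy class of $\phi$ --- is essential, and your proposal needs to either reproduce it or substitute a genuine alternative rather than point back at Ni's proof.
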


It is clear (cf. Lemma \ref{lem:ellinLn}) that if $\ell \not\in \mathcal{L}_n$, then an $\ell$--component link cannot have Euler characteristic $1-n$. It is also clear (cf. Lemma \ref{lem:maxchi}) that $1-n$ is the maximal possible Euler characteristic among all fibered links in $Y_n$. Informally, Proposition \ref{prop:UniqueFiberedLink} therefore says that for allowable $\ell$, maximal Euler characteristic fibered $\ell$--component links in $\#^n(S^1 \times S^2)$ are unique up to diffeomorphism.

In Section \ref{sec:UniqueFibLinkHF} we will give an alternative proof of Proposition \ref{prop:UniqueFiberedLink} that is formally analogous to the Khovanov homology proof of Proposition \ref{prop:Braid}.

We thank John Baldwin for pointing out that (this proof of) Proposition \ref{prop:UniqueFiberedLink} implies:

\begin{corollary} \label{prop:NoFiberedLinkHF} If $Y \not\cong Y_n$ is a closed, oriented $3$--manifold with the same Heegaard-Floer module structure as $Y_n$, then $Y$ contains no fibered links of Euler characteristic $1-n$.
\end{corollary}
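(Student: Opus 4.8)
The plan is to argue by contradiction. Suppose $Y \not\cong Y_n$ has the same Heegaard-Floer module structure as $Y_n$, and suppose for contradiction that $Y$ contains a fibered link $L$ of Euler characteristic $1-n$. First I would recall (cf. Lemma \ref{lem:maxchi}) that $1-n$ is the maximal Euler characteristic of a fibered link in $Y_n$, and that this bound is detected by the module structure on $\widehat{HF}$: the rank of $\widehat{HF}(Y_n)$ together with its $\Lambda^*(H_1)$-module structure forces any fibered link in a $3$-manifold with this module structure to have fiber genus bounded as in $Y_n$. Since $Y$ has the same module structure, a fibered link $L \subset Y$ of Euler characteristic $1-n$ is therefore a \emph{maximal} Euler characteristic fibered link, so the proof of Proposition \ref{prop:UniqueFiberedLink} applies verbatim to the pair $(Y, L)$.

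The key point is that the proof of Proposition \ref{prop:UniqueFiberedLink} referenced in Section \ref{sec:UniqueFibLinkHF} does not actually use that the ambient manifold is $Y_n$ on the nose; it uses only the Heegaard-Floer-theoretic input that a maximal Euler characteristic fibered link has monodromy detected to be trivial, which in turn forces the complement to be a product $F \times [0,1]$ and hence the ambient manifold to be the union of this product with a trivial completion — giving $\#^n(S^1 \times S^2)$. So I would run that proof with $(Y,L)$ in place of $(Y_n, {\bf L}_\ell)$: the sutured Floer homology (or knot Floer homology) of the complement of $L$, computed via the module structure that $Y$ shares with $Y_n$, has minimal rank, which by Ni's fibered detection result forces the monodromy of $L$ to be trivial; a fibered link with trivial monodromy in \emph{any} $3$-manifold has that manifold diffeomorphic to $\#^n(S^1 \times S^2) = Y_n$ (glue the product $F \times S^1$ appropriately — more precisely, the mapping torus of the identity on the fiber surface caps off to $Y_n$). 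This contradicts $Y \not\cong Y_n$.

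The main obstacle is making precise the claim that the relevant Heegaard-Floer input — the rank and module structure that detect maximal Euler characteristic and trivial monodromy — depends only on the $\Lambda^*(H_1(Y))$-module structure of $\widehat{HF}(Y)$ and not on finer invariants of $Y$. Concretely, I need: (i) the top Alexander/adjunction-type grading at which the sutured Floer homology of the link complement is nonzero is determined by this module structure (so "Euler characteristic $1-n$" is forced to be extremal), and (ii) Ni's criterion for a fibered link being the trivial fibration — extremal group having rank one — is phrased in terms of data visible from the module structure. Both should follow from the spectral-sequence/surgery-exact-triangle machinery relating $\widehat{HF}$ of $Y$, of $0$-surgeries, and of the link complement, since those triangles are module maps; I would cite the relevant statements from \cite{NiHomAction} and assemble them, taking care that the identification "trivial monodromy $\Rightarrow$ ambient manifold is $Y_n$" is a purely topological step requiring no hypothesis on $Y$ beyond what the fibration provides.
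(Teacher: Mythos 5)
Your overall architecture is the right one and is exactly what the paper intends: assume $L \subset Y$ is fibered with Euler characteristic $1-n$, run the proof of Proposition \ref{prop:UniqueFiberedLink} on the pair $(Y,L)$ to conclude that the monodromy $h$ is trivial, observe that an open book whose page has Euler characteristic $1-n$ and whose monodromy is trivial can only produce $\#^n(S^1\times S^2)$, and contradict $Y \not\cong Y_n$. The final topological step, and the observation that the proof of Proposition \ref{prop:UniqueFiberedLink} never uses the diffeomorphism type of the ambient manifold beyond its $\Wedge^*(H_1(\,\cdot\,;\F))$--module structure, are both correct.

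The gap is in the middle: you replace the actual mechanism of that proof with ``the sutured Floer homology of the complement has minimal rank, which by Ni's fibered detection result forces the monodromy of $L$ to be trivial.'' That is not a valid deduction. Fibration detection (rank one in the extremal Alexander grading) characterizes \emph{fiberedness}, which is already hypothesized, and says nothing about $h$ being isotopic to the identity; no rank statement of that kind is what does the work here. The argument you should be importing from Section \ref{sec:UniqueFibLinkHF} needs no Alexander gradings, surgery triangles, or spectral sequences: the hypothesis that $\widehat{HF}(-Y)\cong\mathcal{A}_n$ as a $\Wedge^*(H_1(-Y;\F))$--module gives $\zeta_1\cdots\zeta_n\neq 0$; the domain-counting argument in the Honda--Kazez--Mati{\'c} diagram for $(Y,L)$ then shows $A_{[\gamma_1]}\cdots A_{[\gamma_n]}\cdot\theta={\bf x}$, so the contact class is nonzero and $h$ is right-veering by Lemma \ref{lem:RV}; repeating the argument on the mirror shows $h^{-1}$ is right-veering, i.e.\ $h$ is left-veering; and a mapping class that is both right- and left-veering is the identity. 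One small point you do need to check when transplanting this to $Y$: the dual basis classes $[\gamma_1],\ldots,[\gamma_n]$ must be linearly independent in $H_1(-Y;\F)$, since otherwise their product already vanishes in $\Wedge^*$. This holds because $H_1(Y;\F)$ is the cokernel of $h_*-\mathrm{id}$ on $H_1(S;\F)\cong\F^n$; the hypothesis forces $\dim_\F H_1(Y;\F)=n$, so the surjection $H_1(S;\F)\to H_1(Y;\F)$ is an isomorphism and carries the basis $\{[\gamma_i]\}$ of $H_1(S;\F)$ to a basis.
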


There is a unique maximal Euler characteristic fibered link in $S^3$ (namely, the unknot) whose corresponding open book supports the standard tight contact structure. Ken Baker (cf. \cite{JesseJohnson}) asked the following interesting question:

\begin{question} Fix a contact structure, $\xi$, on a $3$--manifold, $Y$, and let \[\overline{\chi}_\xi := \mbox{max} \{\chi(L) \,\,|\,\, \mbox{$L$ is a fibered link whose open book supports $\xi$}\}.\] Up to diffeomorphism, are there finitely many fibered links $L$ supporting $\xi$ with $\chi(L) = \overline{\chi}_\xi$?
\end{question}

Proposition \ref{prop:UniqueFiberedLink} tells us that for the standard tight contact structure on $Y_n$ the answer is yes.
\vskip 10pt

\begin{flushleft}{\bf Acknowledgements:} We thank Ken Baker, John Baldwin, Rob Kirby, Tony Licata, and Danny Ruberman for interesting  conversations and Joan Birman and Bill Menasco for a useful e-mail correspondence. We are especially grateful to Ian Biringer for telling us about Hopfian groups, to Matt Hedden for pointing out that Proposition \ref{prop:UniqueFiberedLink} appears in \cite{NiHomAction}, and to Tim Cochran for making us aware that historical references to Proposition \ref{prop:Braid} in the literature appear under the slogan, ``Milnor's invariants detect the trivial braid."
\end{flushleft}

\section{Khovanov Homology Proof of Proposition \ref{prop:Braid}} 

\begin{proof}[Proof of Proposition \ref{prop:Braid}] Choose a diagram, $D(\widehat{\sigma})$, for $\widehat{\sigma}$ obtained as the closure of a diagram for $\sigma$, and mark the $n$ points on the diagram corresponding to the intersection with the closure arc. Recall that the ($\F = \Z/2\Z$) Khovanov homology, $\Kh(\widehat{\sigma})$, of $\widehat{\sigma}$ is an invariant of the isotopy class of $\widehat{\sigma} \subset S^3$ that takes the form of a bigraded vector space over $\F$. Since we have also chosen a basepoint on each of the $n$ link components, \cite[Prop. 1]{HedNiUnlink} tells us that $\Kh(\widehat{\sigma})$ inherits the structure of a module over the ring \[\mathcal{A}_n := \F[x_1, \ldots, x_n]/(x_1^2, \ldots, x_n^2)\] as follows.

Associated to the diagram of $\widehat{\sigma}$ is a cube of resolutions whose vertices are in one-to-one correspondence with complete resolutions (i.e., Kauffman states) of the diagram. The basis elements (generators) of the underlying vector space of the Khovanov chain complex, $\CKh(D(\widehat{\sigma}))$, are, in turn, in one-to-one correspondence with markings of the components of each resolution with either a $1$ or an $x$ (i.e., {\em enhanced} Kauffman states).

Let $\cI_{braid}$ be the unique ``braid-like" complete resolution of $D(\widehat{\sigma})$, and denote by $\Psi^+$ (resp., $\Psi^-$) the basis element $1 \otimes \ldots \otimes 1$ (resp., $x \otimes \ldots \otimes x$) in the vector space associated to $\cI_{braid}$. $\Psi^-$ is a cycle, hence represents an element in $\Kh(\widehat{\sigma})$. Indeed, $[\Psi^-] \in \Kh(\widehat{\sigma})$ is precisely Plamenevskaya's invariant  \cite{GT0412184} of the transverse isotopy class of the transverse link represented by $\widehat{\sigma}$. 

We are now ready to understand the $\mathcal{A}_n$ structure induced by the $n$ points $p_1, \ldots, p_n$. For each complete resolution, $\cI$,  choose a numbering of its $\ell_\cI$ connected components, and let $v_1 \otimes \ldots \otimes v_{\ell_\cI}$ represent the Khovanov generator whose $j$th component in $\cI$ is marked with $v_j \in \{1,x\}.$ Suppose $p_i$ lies on the $k$th component of $\cI$. Then the action of $x_i \in \mathcal{A}_n$ is the $\F$--linear extension of the assignment:
\[x_i \cdot (v_1 \otimes \ldots \otimes v_k \otimes \ldots \otimes v_{\ell_\cI}) := 
		v_1 \otimes \ldots \otimes x \otimes \ldots \otimes v_{\ell_\cI}\]
if $v_k = 1$ and $0$ otherwise.

It is straightforward to check that the Khovanov differential commutes with the action of $\mathcal{A}_n$, and it is shown in \cite{HedNiUnlink} (see also \cite{MR1740682}, \cite{MR2034399}) that the homotopy equivalences associated to Reidemeister moves respect the $\mathcal{A}_n$--module structure, and moving a basepoint past a crossing yields a homotopic map. The homology, $\Kh(\widehat{\sigma})$, therefore inherits the structure of an $\mathcal{A}_n$--module, and this $\mathcal{A}_n$--module structure is an invariant of the link.

With these preliminaries in place, assume that $\widehat{\sigma} = U_n$. A quick calculation using the standard diagram of $U_n$ tells us that $\Kh(U_n) \cong \mathcal{A}_n$ as an $\mathcal{A}_n$-module. Let $\theta \in \CKh(D(\widehat{\sigma}))$ be a cycle representing the homology class $1 \in \Kh(U_n) \cong \mathcal{A}_n$. 

We now claim that when $\theta$ is expressed as a linear combination of the standard Khovanov generators, the coefficient of $\Psi^+$ must be $1$. To see this, note that $x_1 \cdots x_n(\theta)$ represents the non-zero homology class $x_1 \cdots x_n \in \Kh(\widehat{\sigma})$, but if $v$ is any basis element not equal to $\Psi^+$, then $x_1 \cdots x_n(v) = 0$. We see this immediately for $v \neq \Psi^+ \in \cI_{braid}$, and any complete resolution $\cI \neq \cI_{braid}$ contains at least one connected component intersecting the closure arc more than once, hence containing at least two basepoints $p_i, p_j$, $i \neq j$. We conclude that any basis element $v$ associated to $\cI \neq \cI_{braid}$ satisfies $x_ix_j(v) = 0$, hence also satisfies $x_1 \cdots x_n(v) = 0$.

The arguments in the previous paragraph imply that $x_1 \cdots x_n(\theta) = x_1 \cdots x_n(\Psi^+) = \Psi^-$, so $[\Psi^-] = x_1 \cdots x_n \in Kh(\widehat{\sigma})$. In particular, $[\Psi^-] \neq 0$.

But \cite[Prop. 3.1]{BaldGrigSKhHFL} then implies that $\sigma$ is {\em right-veering}.

Repeat the argument above on $m(\sigma)$, the mirror of $\sigma$, to conclude that $\sigma$ is
also {\em left-veering}. Since the only braid which is both left- and right-veering is the identity braid (cf. \cite[Lem. 3.1]{BaldGrigSKhHFL}), $\sigma = \Id_n$, as desired.

\end{proof}

\section{Fibred links in $\#^n(S^1 \times S^2)$}
Recall that $\mathcal{L}_n := \{\ell \in \Z^+\,\,\vline\,\, \ell \leq (n+1) \mbox{ and } \ell \equiv (n+1) \mod 2 \}.$

\begin{lemma} \label{lem:ellinLn} If an $\ell$--component link $L$ has Euler characteristic $1-n$, then $\ell \in \mathcal{L}_n$.
\end{lemma}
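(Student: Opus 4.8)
The plan is to connect the Euler characteristic of a Seifert surface to a parity constraint on the number of boundary components. Let $F$ be a connected Seifert surface for $L$, i.e., a compact connected oriented surface of some genus $g \geq 0$ with $\ell$ boundary circles, so that $\partial F = L$. Then $\chi(F) = 2 - 2g - \ell$. Setting this equal to $1-n$ gives $\ell = n + 1 - 2g$. From this identity two things follow immediately: first, $\ell \leq n+1$ since $g \geq 0$; and second, $\ell \equiv n+1 \pmod 2$ since $\ell$ and $n+1$ differ by the even number $2g$. Since also $\ell \geq 1$ (a link has at least one component), we get $\ell \in \mathcal{L}_n$, which is exactly the claim.

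The one subtlety is what ``Euler characteristic of $L$'' means when $L$ is not connected, or when one wants to allow disconnected Seifert surfaces. If $F$ is a Seifert surface with $c$ connected components $F_1, \ldots, F_c$ of genera $g_1, \ldots, g_c$ and boundary counts $\ell_1, \ldots, \ell_c$ (each $\ell_j \geq 1$), then $\chi(F) = \sum_j (2 - 2g_j - \ell_j) = 2c - 2\sum_j g_j - \ell$, so $\ell = 2c - 2\sum_j g_j - \chi(F) = 2c - 2\sum_j g_j - (1-n) = (n+1) + 2\bigl(c - 1 - \sum_j g_j\bigr)$. Hence $\ell \equiv n+1 \pmod 2$ regardless of connectedness. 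For the inequality $\ell \leq n+1$, one uses that a fibered link's fiber surface is connected (or, more elementarily, that in the context of the paper the relevant surface is connected), which forces $c = 1$ and recovers $\ell = n+1-2g \leq n+1$; alternatively, if one only assumes $\ell = 1-n$ is realized by \emph{some} Seifert surface there is a bit more bookkeeping, but in the fibered setting of the surrounding discussion the fiber is connected and the clean argument applies.

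I expect essentially no obstacle here: this is the promised ``it is clear'' lemma, and the only thing to be careful about is stating the surface as connected (which is automatic for fibered links, the case of interest) so that the single identity $\chi(F) = 2 - 2g - \ell$ does all the work. I would simply write: let $F$ be the (connected) fiber surface of genus $g$; then $1 - n = \chi(F) = 2 - 2g - \ell$, so $\ell = (n+1) - 2g$, whence $\ell \leq n+1$ and $\ell \equiv n+1 \pmod 2$, and since $\ell \geq 1$ we conclude $\ell \in \mathcal{L}_n$.
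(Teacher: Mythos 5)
Your proof is correct and is essentially the paper's own argument: the paper likewise takes the (connected) fiber surface $S$ and reads off $\ell \leq n+1$ and $\ell \equiv n+1 \pmod 2$ from $1-n = \chi(S) = 2 - 2g(S) - \ell$. Your extra discussion of disconnected Seifert surfaces is unnecessary here, since the paper explicitly defines the Euler characteristic of $L$ via the fiber of a fibered link, which is connected.
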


\begin{proof} Let $S$ denote the fiber surface of $L$, $\chi(S)$ its Euler characteristic, and $g(S)$ its genus. Then $\chi(S) = 1-n = (2-2g(S)) - \ell.$ Since $g(S) \in \Z^{\geq 0}$, we obtain $\ell \equiv (n+1) \mod 2$ and $\ell \leq n+1$.
\end{proof}

\begin{lemma} \label{lem:maxchi} If $L \subset Y_n$ is a fibered link, then $\chi(L) \leq 1-n.$
\end{lemma}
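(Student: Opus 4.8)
The statement is that any fibered link $L \subset Y_n = \#^n(S^1 \times S^2)$ has fiber surface $S$ with $\chi(S) \leq 1-n$. The plan is to use the open book decomposition structure directly, together with the basic homological constraint imposed by the fact that $Y_n$ has first Betti number $n$. First I would recall that a fibered link $L \subset Y_n$ with fiber $S$ (a compact oriented surface with $\partial S = L$) and monodromy $\phi \colon S \to S$ presents $Y_n$ as the mapping torus of $\phi$ with solid tori glued along the binding. The key homological input is the Wang exact sequence for the mapping torus: one gets a relation between $H_1(Y_n)$, $H_1(S)$, and the map $\phi_* - \mathrm{id}$ on $H_1(S)$, which shows that $b_1(Y_n) \leq b_1(S) + (\text{number of binding-related corrections})$, and in particular that $b_1(S) \geq n$ forces constraints on $\chi(S) = 1 - b_1(S)$ when $S$ has connected... — more carefully, $\chi(S) = 2 - 2g(S) - \ell$ where $\ell$ is the number of boundary components, so I want to show $2g(S) + \ell \geq n+1$.

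Here is the cleaner route I would actually take. Capping off each boundary component of $S$ with a disk produces a closed surface $\widehat{S}$ of genus $g = g(S)$, and the open book gives a map to the closed mapping torus; alternatively, observe that $Y_n$ is obtained by surgery and compute $H_1$. The cleanest: from the mapping torus description, $H_1(Y_n;\Q)$ surjects onto (or is built from) the coinvariants and invariants of $\phi_*$ acting on $H_1(S;\Q) \cong \Q^{2g + \ell - 1}$. Since every self-homeomorphism $\phi$ of $S$ acts on $H_1(S;\Q)$ by a matrix, the rank of $\mathrm{coker}(\phi_* - \mathrm{id})$ is at most $2g+\ell-1$, but actually I want a lower bound on $b_1(S)$ in terms of $b_1(Y_n) = n$: the Wang sequence gives $b_1(Y_n) \leq 1 + \dim\ker(\phi_* - \mathrm{id} \text{ on } H_1(S;\Q)) \leq 1 + (2g + \ell - 1) = 2g + \ell$. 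Hence $n \leq 2g + \ell$. Combined with $\chi(S) = 2 - 2g - \ell \leq 2 - n$... — this only gives $\chi(S) \leq 2-n$, one off. To get the sharp bound $\chi(S) \leq 1-n$ I would use that $H_1(Y_n) = \Z^n$ has \emph{no torsion} and \emph{$b_1 = n$ with trivial cup product structure}, or simply invoke that $Y_n$ being a connected sum of $S^1\times S^2$'s means its open books have a known minimal-genus/complexity structure; more elementarily, note that if $\ell = 1$ (connected binding, i.e. a fibered knot) then the monodromy fixes the boundary, so $\phi_*-\mathrm{id}$ has a full rank-$2g$ image is not forced, but $\dim\ker(\phi_*-\mathrm{id}) \le 2g$ and one extra dimension is killed, giving $n \le 2g$, i.e. $\chi(S) = 1 - 2g \le 1-n$; and the general case reduces to this by a boundary-connected-sum / plumbing argument, or by observing $2g + \ell - 1 \geq 2g$ always and tracking the $\ell$ boundary classes carefully in the Wang sequence, where the $\ell$ boundary circles of $S$ become the binding and contribute additional relations.

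The main obstacle I anticipate is getting the constant exactly right — i.e. proving $\chi(S) \leq 1-n$ rather than the weaker $\chi(S) \leq 2-n$ — which hinges on correctly accounting for the interplay between the $\ell$ boundary components of $S$, the binding, and the monodromy's action on the relative versus absolute homology of $S$ in the Wang/Mayer--Vietoris sequence. The resolution should be that one boundary component of $S$ is "used up" trivially (the monodromy can be taken to fix $\partial S$ pointwise, so the homology class of binding components survives into $H_1(Y_n)$ in a controlled way that I need to quotient out), recovering the extra $-1$. I would write this out via the exact sequence of the pair $(Y_n, \text{binding})$ or equivalently by noting $Y_n \setminus L$ fibers over $S^1$ with fiber $\mathrm{int}(S)$, so $b_1(Y_n \setminus L) \leq 1 + b_1(S)$, then relating $b_1(Y_n \setminus L)$ to $b_1(Y_n) = n$ via the long exact sequence of the pair and Alexander duality, where meridians of $L$ contribute. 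This is routine once set up; I expect the author's proof to do essentially this, possibly more slickly by citing that the page of an open book for $Y$ has $\chi \leq \chi$ of any genus-minimizing... — in any case, the heart is a Betti-number count in the Wang exact sequence for the fibration $Y_n \setminus L \to S^1$.
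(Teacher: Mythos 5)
Your overall strategy---view $Y_n \setminus L$ as the mapping torus of the monodromy and count generators of $H_1$---is the same as the paper's, but as written you never close the off-by-one gap you yourself flag, and the mechanisms you propose for closing it are not the right ones. The precise slip is that your inequality $b_1(Y_n) \leq 1 + \dim\ker(\phi_*-\mathrm{id})$ is really a bound on $b_1(Y_n \setminus L)$: it is the \emph{complement} that fibers over $S^1$, and the extra $+1$ is the class of a section of that fibration (a meridional-direction class), not a class of $Y_n$. The missing step is what happens when the binding is filled back in: each meridian of $L$ intersects the page once, hence maps under the Wang splitting $H_1(Y_n\setminus L) \cong \operatorname{coker}(\phi_*-\mathrm{id}) \oplus \Z$ to the generator of the $\Z$ summand plus an element of $\operatorname{coker}(\phi_*-\mathrm{id})$. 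Dehn filling along the meridians therefore kills that $\Z$ summand, so $H_1(Y_n)$ is a quotient of $\operatorname{coker}(\phi_*-\mathrm{id})$, which is itself a quotient of $H_1(S) \cong \Z^{1-\chi(S)}$ (free of rank $2g+\ell-1$ since $S$ has boundary). Thus $H_1(Y_n)\cong\Z^n$ admits a presentation with $1-\chi(S)$ generators, forcing $1-\chi(S)\ge n$, i.e.\ $\chi(S)\le 1-n$. This is exactly the paper's (one-line) argument.

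By contrast, the fixes you sketch do not work as stated. The sharp constant has nothing to do with ``one boundary component of $S$ being used up'': the rank $2g+\ell-1$ of $H_1(S)$ is a purely surface-theoretic fact you already had, and the discrepancy lives entirely in the passage from the complement to the closed manifold. In the $\ell=1$ case you assert that ``one extra dimension is killed'' without justification---the justification is again the meridian filling---and the proposed reduction of the general case to $\ell=1$ by boundary connected sum is neither needed nor obviously valid. Your closing sentence does gesture at the meridians contributing relations, so the correct idea is within reach, but in the proposal the inequality $\chi(S)\le 1-n$ is ultimately asserted rather than proved.
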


\begin{proof} Suppose $L$ has $\ell$ components, and let $S$ denote the fiber surface of $L$, and $h$ its monodromy. $H_1(S)$ is free of rank $1-\chi(S) = 2g(S) + (\ell - 1)$. Viewing $Y_n - L$ as the mapping torus of $h$ (cf. Section 2.1), we obtain a corresponding presentation of $H_1(Y_n) \cong \Z^n$ with $1-\chi(L)$ generators, hence $1-\chi(L) \geq n$.
\end{proof}

\subsection{Heegaard-Floer homology proof of Proposition \ref{prop:UniqueFiberedLink}} \label{sec:UniqueFibLinkHF}

We begin with some background on Heegaard-Floer homology.

\subsubsection{Heegaard-Floer module}

Recall that in \cite{MR2113019},  Ozsv{\'a}th-Szab{\'o} associate to a closed, oriented $3$--manifold $Y$ a graded vector space (for simplicity we work over $\F = \Z/2\Z$), $\widehat{HF}(Y)$, which splits over Spin$^c(Y)$, the set of spin$^c$ structures on $Y$:
\[\widehat{HF}(Y) = \bigoplus_{\spincs \in \mbox{Spin}^c(Y)} \widehat{HF}(Y,\spincs)\] For appropriate choices of symplectic and almost complex structures, $\widehat{HF}(Y)$ is the Lagrangian Floer homology of a natural pair of Lagrangian tori, $\mathbb{T}_{\bf \alpha}$ and $\mathbb{T}_{\bf \beta}$, in the $g$--fold symmetric product of a pointed Heegaard surface, $(\Sigma, w)$, for $Y$.

$\widehat{HF}(Y)$ can be given the structure of a module over $\Wedge^*(H_1(Y;\F)),$ as described in \cite[Sec. 4.2.5]{MR2113019}. Explicitly, let \[(\Sigma, {\bf \alpha} = \{\alpha_1, \ldots, \alpha_g\}, {\bf \beta} = \{\beta_1, \ldots, \beta_g\}, z)\] be a pointed, genus $g$ Heegaard splitting of $Y$, and consider $\zeta \in H_1(Y;\F)$. Ozsv{\'a}th-\Szabo define an associated chain map, \[A_\zeta: \widehat{CF}(\Sigma,\alpha,\beta,z) \rightarrow \widehat{CF}(\Sigma,\alpha,\beta,z),\] on the Heegaard-Floer chain complex as follows (\cite[Rmk. 4.20]{MR2113019}). Let ${\bf x}, {\bf y} \in \mathbb{T}_\alpha \cap \mathbb{T}_\beta$ be generators of the chain complex. Recall that $\pi_2({\bf x},{\bf y})$ denotes the set of domains in $\Sigma$ representing topological Whitney disks connecting ${\bf x}$ to ${\bf y}$, in the sense of \cite[Sec. 2.4]{MR2113019}.  If $\phi \in \pi_2({\bf x}, {\bf y})$, we follow the notation in \cite[Sec. 2.1]{NiHomAction}, letting $\partial_\alpha\phi := (\partial \phi) \cap \mathbb{T}_\alpha$, regarded as a $1$--chain with boundary ${\bf y} - {\bf x}$. 

Choose an immersed curve, \[\gamma_\zeta \subset \Sigma - \{\alpha_i \cap \beta_j\}_{i,j \in \{1, \ldots, g\}},\] representing $\zeta \in H_1(Y;\F)$ and define \[a(\gamma_\zeta, \phi) := \# \widehat{\mathcal{M}}(\phi) (\gamma_\zeta \cdot \partial_\alpha\phi),\] where $\gamma_\zeta \cdot \partial_\alpha\phi$ is the algebraic intersection number of $\gamma_\zeta$ and $\partial_\alpha\phi$. Then the chain map associated to $\zeta$ is given by: 

\[A_\zeta({\bf x}) = \sum_{{\bf y} \in \mathbb{T}_\alpha \cap \mathbb{T}_\beta} \hskip 5pt \sum_{\left\{\phi \in \pi_2({\bf x}, {\bf y}) \,\, \vline \,\,\mu(\phi) = 1, n_{w}(\phi) = 0\right\}} a(\gamma_\zeta, \phi)\cdot {\bf y}.\] 

The map $A_\zeta$ is well-defined (independent of the choice of $\gamma$) up to chain homotopy (cf. \cite[Lem. 2.4]{NiHomAction}). 

\subsubsection{Heegaard-Floer contact invariant}

We now recall the definition of the Heegaard-Floer contact invariant \cite{MR2153455}, following the alternative construction given in \cite{MR2577470}. Let $\xi$ be a contact structure on a closed, connected, oriented $3$--manifold $Y$. Then Giroux tells us \cite{MR1957051} that there exists some fibered link $L$ whose corresponding open book supports $\xi$. One can then build a Heegaard diagram for $-Y$ ($Y$ with the opposite orientation) using
\begin{itemize}
	\item a choice of {\em basis}, $\{a_1, \ldots, a_n\}$, for a page $S$ (of Euler characteristic $1-n$)  of the open book \cite[Sec. 3.1]{MR2577470}, and 
	\item the data of the monodromy, $h$, of the open book.
\end{itemize}

Honda-Kazez-Mati{\'c} then identify a distinguished cycle in the corresponding chain complex, $\widehat{CF}(-Y)$, and prove both that the class it represents in $\widehat{HF}(-Y)$ is invariant of the choices used in its construction and that it agrees with the contact invariant defined in \cite{MR2153455}.

We will need the following property of the contact invariant, which follows immediately from \cite[Thm. 1.4]{MR2153455} and \cite[Thm. 1.1]{MR2318562}:

\begin{lemma} \label{lem:RV} If $L \subset Y$ is a fibered link whose monodromy, $h$, is not right-veering, then the Heegaard-Floer contact invariant associated to the contact structure supported by $L$ is $0$.
\end{lemma}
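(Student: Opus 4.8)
The plan is to combine two black-box results cited in the excerpt: the theorem of Honda--Kazez--Mati\'c \cite{MR2153455} characterizing when the Heegaard-Floer contact invariant vanishes (namely, the contact invariant of $\xi$ is zero whenever $\xi$ is supported by an open book with non-right-veering monodromy), together with the result of \cite{MR2318562} giving the converse half of this dichotomy --- that overtwisted contact structures have vanishing contact invariant. I would first recall that if $L \subset Y$ is fibered with monodromy $h$ that is \emph{not} right-veering, then by \cite[Thm. 1.1]{MR2318562} the supported contact structure $\xi_L$ is overtwisted; the point is that a non-right-veering open book is, by the Honda--Kazez--Mati\'c criterion, exactly one that supports an overtwisted structure.

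The key step is then to invoke \cite[Thm. 1.4]{MR2153455}, which states that the Heegaard-Floer contact invariant $c(\xi)$ of any overtwisted contact structure $\xi$ vanishes. Chaining these: $h$ not right-veering $\implies$ $\xi_L$ overtwisted $\implies$ $c(\xi_L) = 0$. Since the construction of the contact invariant recalled just above (via Honda--Kazez--Mati\'c's distinguished cycle in $\widehat{CF}(-Y)$ built from a basis of the page and the monodromy data) produces precisely the class $c(\xi_L)$, this gives exactly the statement of the lemma.

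The main obstacle --- or rather, the only thing requiring care --- is bookkeeping about orientation conventions and which paper supplies which implication: one must make sure the ``not right-veering $\implies$ overtwisted'' direction is the one available (this is the content of \cite[Thm. 1.1]{MR2318562}, building on the Honda--Kazez--Mati\'c right-veering framework), and that the vanishing of $c(\xi)$ for overtwisted $\xi$ is \cite[Thm. 1.4]{MR2153455} rather than its converse. Since both are cited as established results and the lemma only asserts the forward (vanishing) direction, no genuine difficulty arises; the proof is a two-line composition. I would write it as: \emph{By \cite[Thm. 1.1]{MR2318562}, since $h$ is not right-veering, the contact structure $\xi_L$ supported by $L$ is overtwisted. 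By \cite[Thm. 1.4]{MR2153455}, the contact invariant of an overtwisted contact structure vanishes, so the contact invariant associated to $\xi_L$ is $0$.}
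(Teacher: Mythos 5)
Your proposal is correct and is exactly the paper's argument: the paper gives no separate proof, stating only that the lemma ``follows immediately from'' \cite[Thm. 1.4]{MR2153455} (overtwisted implies vanishing contact invariant) and \cite[Thm. 1.1]{MR2318562} (a non-right-veering supporting open book implies overtwisted), which is precisely the two-step composition you wrote in your final paragraph. The only blemish is the misattribution in your opening paragraph, where you label \cite{MR2153455} as Honda--Kazez--Mati\'c and initially swap the roles of the two references, but your concluding two-line proof assigns each implication to the correct citation.
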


We now proceed to the proof.

\begin{proof}[Proof of Proposition \ref{prop:UniqueFiberedLink}] Let $L_\ell \subset Y_n$ be an $\ell$--component fibered link of Euler characteristic $1-n$. Construct a corresponding Heegaard diagram for $-Y_n$ as in \cite[Sec. 3]{MR2577470}.

The module structure on $\widehat{HF}(-Y_n)$ has been computed in \cite[Lem. 9.1]{MR2113019}. Explicitly, $\widehat{HF}(-Y_n) \cong \mathcal{A}_n$ as a module over \[\Wedge^*(H_1(-Y_n;\F)) \cong \mathcal{A}_n := \F[\zeta_1, \ldots, \zeta_n]/(\zeta_1^2, \ldots, \zeta_n^2).\]  In particular, $\zeta_1\cdots \zeta_n \neq 0 \in \widehat{HF}(-Y_n)$.

We can understand the module action explicitly in our setting as follows. All of our notation matches \cite{MR2577470}. Examine the Honda-Kazez-Mati{\'c} Heegaard diagram $\Sigma = S_{1/2} \cup -S_0$ associated to the fibered link, $L_\ell$, and look at the ``uninteresting" half, $S_{1/2} \subset \Sigma$, where the $n$--tuple of intersection points representing the contact class lives. Choose a compatible {\em dual basis}, $\{\gamma_1, \ldots, \gamma_n\}$, of simple closed curves on $S_{1/2}$ satisfying $|a_i \cap \gamma_j| = \delta_{ij}$. The set of homology classes, $\{[\gamma_1], \ldots, [\gamma_n]\}$, obtained by viewing the $\gamma_i$ as $1$--cycles in $-Y_n$, forms a basis for $H_1(-Y_n;\F)$. Hence, for each $i \in \{1, \ldots, n\}$, the corresponding map on homology induced by the chain map $A_{[\gamma_i]}$ can be identified with $\zeta_i \in \mathcal{A}_n$.

Let $\theta \in \widehat{CF}(-Y_n)$ be any cycle representing $1 \in \widehat{HF}(-Y_n)$. Since $\zeta_1 \cdots \zeta_n \neq 0 \in \widehat{HF}(-Y_n)$, we know that there exists at least one generator ${\bf y} \in \mathbb{T}_\alpha \cap \mathbb{T}_\beta$ satisfying \[\langle A_{[\gamma_1]} \cdots A_{[\gamma_n]} \cdot \theta, {\bf y}\rangle \equiv 1 \mod 2.\] 

Associated to such a generator ${\bf y}$ is an odd number of corresponding Maslov index $n$ domains in $\pi_2(\theta,{\bf y})$, each of which can be realized as the sum of $n$ of the Maslov index $1$ domains contributing to the chain maps $A_{[\gamma_1]}, \ldots, A_{[\gamma_n]}$. Consider the local multiplicity of such a Maslov index $n$ domain, $\psi$, in the $4$ regions adjacent to one of the constituent intersection points, $x_i$, of the distinguished cycle ${\bf x} = (x_1, \ldots, x_n)$ representing the contact class. We know (see Figure \ref{fig:HeegaardSurf}) that the local multiplicity of $\psi$ in the two regions adjacent to $x_i$ that contain the basepoint, $z_0$, must be $0$ and also that the local multiplicity in the region adjacent to the unique intersection point between $\gamma_i$ and $a_i$ must be nonzero (hence positive, since $\psi$ is a sum of domains representing holomorphic disks). Since the fourth region must have non-negative multiplicity, we conclude that $x_i$ must be a corner, of multiplicity at least one, in the boundary of $\psi$, implying that $x_i$ must be a constituent intersection point of the generator ${\bf y}$. 

\begin{figure}
\begin{center}
\resizebox{5in}{!}{\input{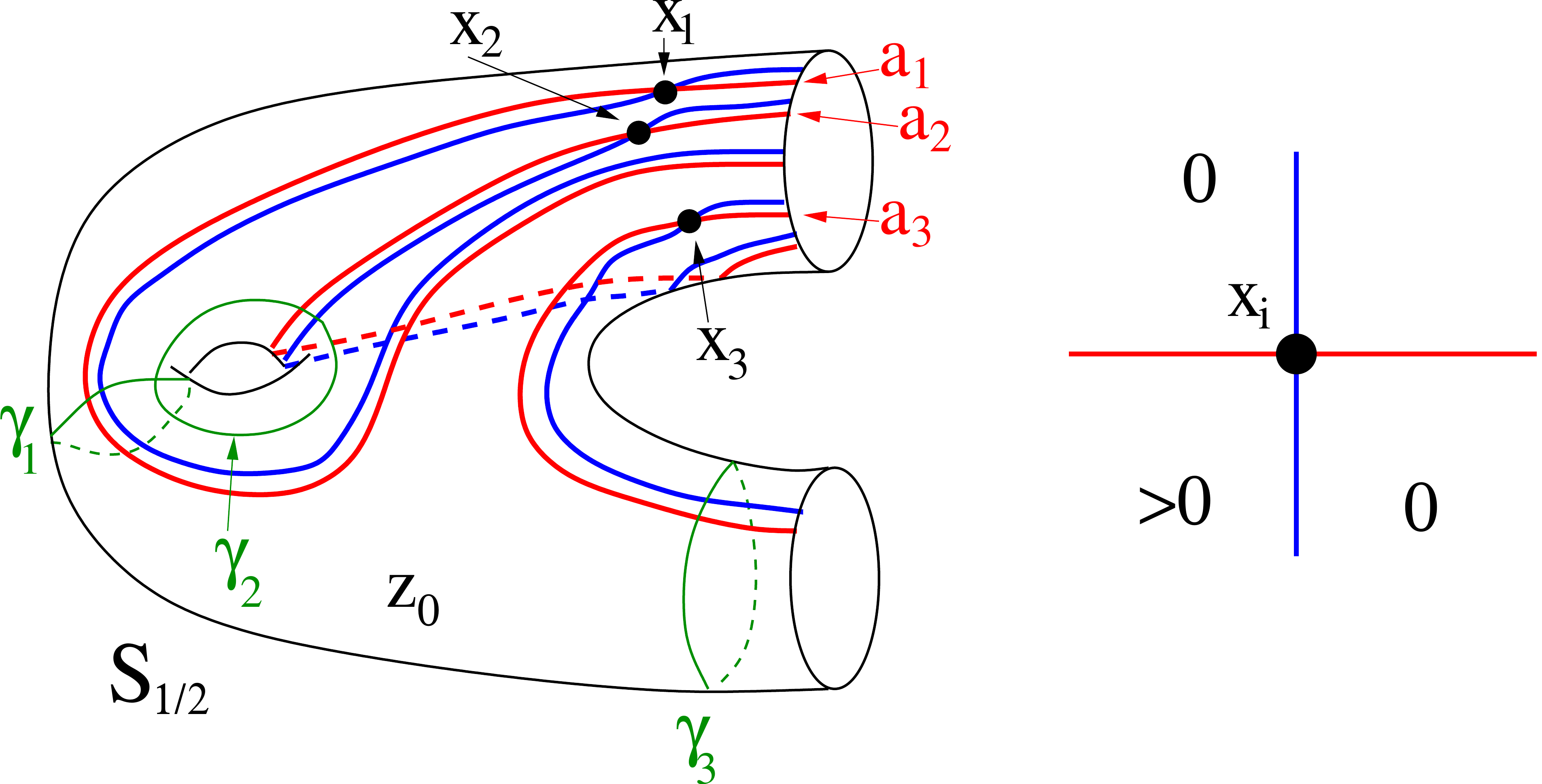_t}}
\end{center}
\caption{The ``uninteresting" half of a Honda-Kazez-Mati{\'c} Heegaard diagram associated to a fibered link $L_2 \subset Y_3$. The right-hand picture is a close-up of one of the constituent intersection points of the contact class and restrictions on the local multiplicities of the Maslov index $n$ domain $\psi$. The NW, SE domains must have multiplicity $0$ since they contain the basepoint $z_0$. One of the other two domains must have positive multiplicity, since it is the unique domain intersecting $\gamma_i$.}
\label{fig:HeegaardSurf}
\end{figure}

Since the above argument holds for each of the $x_i$, we conclude that, in fact, ${\bf y}$ is actually the distinguished contact class, ${\bf x}$, and it follows that (working mod $2$) $A_{[\gamma_1]} \cdots A_{[\gamma_n]} \cdot \theta = {\bf x}$. Therefore, \[[A_{[\gamma_1]} \cdots A_{[\gamma_n]} \cdot \theta] = [{\bf x}] = \zeta_1 \cdots \zeta_n \neq 0 \in \widehat{HF}(-Y_n),\] so the Heegaard-Floer contact invariant associated to the contact structure supported by $L_\ell$ is nonzero. By Lemma \ref{lem:RV}, the monodromy, $h$, of $L_\ell$ is right-veering.

Now consider the {\em mirror} of $L$, i.e., the fibered link $L \subset -Y_n$ with monodromy $h^{-1}$. By running the same argument above, we conclude that the contact invariant associated to the contact structure supported by the mirror of $L$ is also nonzero. Hence, $h^{-1}$ is right-veering, implying that $h$ is left-veering.

But if $h$ is both right- and left-veering, then $h$ is isotopic to the identity mapping class, and hence $(Y_n, L_\ell)$ is diffeomorphic as a pair to $(Y_n, {\bf L}_\ell)$.
\end{proof}
\bibliography{BraidClosure_Unlink}
\end{document}